\documentclass[11pt]{amsart}

\usepackage{latexsym,amssymb,amsthm,amsmath,amscd,multirow,graphics,multicol}

\theoremstyle{plain}

\newtheorem{theorem}{Theorem}[section]
\newtheorem{lemma}{Lemma}[section]

\theoremstyle{remark}
\newtheorem{remark}{Remark}
\theoremstyle{remark}

\theoremstyle{definition}

\numberwithin{equation}{section}

\def\e{\eqref}
\def\p{\partial }
\def\<{\left < }
\def\>{\right >}
\def\({\left ( }
\def\){\right )}

\def\sech{\,{\rm sech\,}}
\def\e{\eqref}

\begin{document}

\title[ Gauss-Codazzi  and the Ricci equations]
{Dependence of the Gauss-Codazzi equations and the Ricci equation of Lorentz surfaces}

\author{BANG-YEN CHEN (East Lansing)}

\address{Department of Mathematics\\
    Michigan State University \\East Lansing, Michigan 48824--1027\\ U.S.A.}
\email{bychen@math.msu.edu}

 \subjclass[2000]{Primary: 53C40; Secondary  53C50}

\keywords{Lorentz surfaces;   equation of Ricci; equations of Gauss-Codazzi;  Lorentzian Kaehler surface.}

\begin{abstract}  The fundamental equations of Gauss, Codazzi and Ricci  provide the conditions for local isometric embeddability.  In general, the three fundamental equations  are independent for  surfaces in  Riemannian $4$-manifolds.  In contrast, we prove in this article  that for  arbitrary Lorentz surfaces  in  Lorentzian Kaehler surfaces the   equation of Ricci is a consequence of the equations of Gauss and Codazzi.   

\end{abstract}

\maketitle

\section{Introduction.}
Let $\tilde M^n$ be a complex $n$-dimensional indefinite Kaehler manifold, that means $\tilde M^n$ is
endowed with an almost complex structure $J$ and with an indefinite Riemannian
metric $\tilde g$, which is $J$-Hermitian, i.e., for all $p\in \tilde M^n$, we have
\begin{align}\label{1.1} &\tilde g(JX, JY)=\tilde g(X,Y), \;\; \forall  X,Y\in T_pM^n,  \\& \label{1.2}\tilde \nabla J=0, \end{align}
where $\tilde \nabla$ is the Levi-Civita connection of $\tilde g$. It follows  that $J$  is
integrable.

The complex index of $\tilde M^n$ is defined as the complex dimension of the largest complex negative
definite subspace of the tangent space. When the complex index is one,  we denote the  indefinite Kaehler manifold  by $\tilde M^n_1$, which is called a {\it Lorentzian Kaehler manifold} (cf.  \cite{br}).

The curvature tensor $\tilde  R$ of an  indefinite Kaehler manifold $\tilde M^n$ satisfies
\begin{align} & \label{1.3} \tilde R(X,Y;Z,W)=-\tilde R(Y,X;Z,W),
\\& \label{1.4}\tilde R(X,Y;Z,W) =\tilde R(Z,W;X,Y),
\\& \label{1.5} \tilde R(X,Y;JZ,W) =-\tilde R(X,Y;Z,JW),\end{align} where $\tilde R(X,Y;Z,W)=\tilde g(\tilde R(X,Y)Z,W)$.

It is well-known that the three fundamental equations of Gauss, Codazzi and Ricci play fundamental roles in the theory of submanifolds. For  surfaces in  Riemannian 4-manifolds, 
the three  equations of Gauss, Codazzi and Ricci are independent in general. 

On the other hand, we prove in this article a fundamental result for Lorentz surfaces; namely, {\it for any Lorentz surface  in any Lorentzian Kaehler surface the   equation of Ricci is a consequence of the equations of Gauss and Codazzi. }

\section{Basic formulas and fundamental equations}

Let $M^2_1$ be a Lorentz surface in a  Lorentzian Kaehler surface $\tilde M^2_1$ with an almost complex structure $J$ and  Lorentzian Kaehler metric $\tilde g$. 
Let  $g$ denote the induced metric on $M^2_1$. Denote by $\nabla$ and $\tilde\nabla$ the Levi-Civita connection on $g$ and $\tilde g$, respectively; and  by $R$  the curvature tensor of $M$.

 The formulas of Gauss and Weingarten are given respectively by (cf. \cite{c1,O})
\begin{align} &\label{2.1}\tilde \nabla_XY=\nabla_XY+h(X,Y),\\& \label{2.2}\tilde\nabla_X\xi=-A_\xi X+D_X \xi\end{align}
for vector fields $X,Y$ tangent to $M^2_1$ and $\xi$ normal to $M$,
where $h,A$ and $D$ are the second fundamental form, the shape operator and the normal connection. 

For a normal vector $\xi$ of $M^2_1$ at $x\in M^2_1$, the shape operator $A_{\xi}$ is a symmetric endomorphism of the tangent space $T_xM^2_1$.  The shape operator and the second fundamental form are related by
\begin{align}\label{2.3} \tilde g(h(X,Y),\xi)=g(A_{\xi}X,Y)\end{align}
for $X,Y$ tangent to $M^2_1$.

The three fundamental equations of Gauss, Codazzi and Ricci are given  by
\begin{align}\label{2.4} &R(X,Y;Z,W) =\tilde R(X,Y;Z,W)+  \<h(X,W),h(Y,Z)\>\\&\notag \hskip1.2in  - \<h(X,Z),h(Y,W)\>, \\ &  \label{2.5} (\tilde R(X,Y)Z)^\perp=(\bar\nabla_X h)(Y,Z) - (\bar\nabla_Y h)(X,Z),
\\&\label{2.6} \tilde g(R^D(X,Y)\xi,\eta)=\tilde R(X,Y;\xi,\eta)+ g([A_\xi,A_\eta]X,Y),
\end{align}
where $X,Y,Z,W$ are vector tangent to $M^2_1$,
and $\bar\nabla h$ is defined by
\begin{equation}\begin{aligned}\label{2.7}(\bar \nabla_X h)(Y,Z) = D_X h(Y,Z) - h(\nabla_X Y,Z) - h(Y,\nabla_XZ).\end{aligned}\end{equation}

The following lemma is an easy consequence of a result of \cite{L}.

\begin{lemma} \label{L:1} Locally there exists a coordinate system $\{x,y\}$ on  a Lorenz surface $M^2_1$ such that the metric tensor  is given by
\begin{align}\label{2.8}g=-m^2(x,y)^2(dx\otimes dy+dy\otimes dx)\end{align}
for some positive function $m(x,y)$.
\end{lemma}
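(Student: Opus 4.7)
The plan is to construct local null (lightlike) coordinates on $M^2_1$ and to verify directly that in such coordinates the metric takes the stated form. At each point $p \in M^2_1$ the quadratic form $v \mapsto g_p(v,v)$ has signature $(1,1)$, so its zero locus in $T_pM^2_1$ consists of two distinct transverse lines. Smoothness of $g$ allows one to select, on a sufficiently small neighborhood $U$ of any point, two smooth one-dimensional null distributions $\mathcal L_1, \mathcal L_2 \subset TU$; this is the step for which the result of \cite{L} cited by the author is used.

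Since one-dimensional distributions on a surface are automatically integrable, $\mathcal L_1$ and $\mathcal L_2$ integrate to two transverse foliations of $U$ by null curves. I would then introduce coordinates $(x,y)$ on $U$ (possibly after shrinking) by simultaneously straightening these two foliations, arranging that the curves $\{y=\text{const}\}$ are the leaves of $\mathcal L_1$ and the curves $\{x=\text{const}\}$ are the leaves of $\mathcal L_2$. With this choice $\partial_x$ and $\partial_y$ are tangent to null leaves, so $g(\partial_x,\partial_x)=g(\partial_y,\partial_y)=0$, and hence
\begin{equation*}
g = \lambda(x,y)\,(dx\otimes dy + dy\otimes dx),\qquad \lambda(x,y) := g(\partial_x,\partial_y).
\end{equation*}
Nondegeneracy of $g$ forces $\lambda$ to be nowhere zero on $U$, so it has constant sign after further shrinking $U$; by reversing the orientation of one of the null line fields if necessary (equivalently, replacing $y$ by $-y$) we may arrange $\lambda < 0$, and writing $\lambda = -m(x,y)^2$ with $m > 0$ yields precisely \eqref{2.8}.

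The main obstacle in the argument is the smooth selection of the two null line fields $\mathcal L_1,\mathcal L_2$ on the neighborhood $U$: one must be sure that the two branches of the null cone can be coherently distinguished as one moves through $U$, rather than swapping as one goes around a loop. This is exactly the local content of the cited result from \cite{L}; once the distributions are in hand, the simultaneous straightening of the two transverse foliations and the verification of the sign of $\lambda$ are routine, and the lemma follows.
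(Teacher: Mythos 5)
Your proof is correct, but it takes a genuinely different route from the paper's. The paper simply quotes from \cite{L} the existence of Lorentz isothermal coordinates $(u,v)$ in which $g=E(u,v)^2(-du\otimes du+dv\otimes dv)$, and then obtains \eqref{2.8} by the linear substitution $x=u+v$, $y=u-v$, with $m=E/\sqrt{2}$. You instead construct null coordinates directly: integrate the two null line fields, straighten the resulting transverse foliations via first integrals, and fix the sign of $\lambda=g(\partial_x,\partial_y)$. Your route is in fact more elementary and self-contained, and the one step you flag as the ``main obstacle'' --- coherently distinguishing the two branches of the null cone --- needs no external result at all on a single chart: a smooth local pseudo-orthonormal frame $\{X_1,X_2\}$ with $g(X_1,X_1)=-1$, $g(X_2,X_2)=1$ (obtained by Gram--Schmidt from any coordinate frame) yields the smooth null vector fields $X_1\pm X_2$ spanning the two distributions, so the branch-swapping worry is a purely global monodromy issue that cannot arise locally; your appeal to \cite{L} for this step is therefore unnecessary, and is also a misattribution, since the paper uses \cite{L} for the isothermal-coordinate theorem, not for selecting null directions. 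The two arguments are essentially inverse to one another: from your null coordinates, setting $u=(x+y)/2$, $v=(x-y)/2$ recovers the isothermal form, which reflects the fact that in the Lorentzian setting isothermal coordinates are an ODE-level statement (flow-box plus first integrals), unlike the elliptic Riemannian analogue. What the paper's approach buys is brevity modulo the citation; what yours buys is a complete proof from scratch, at the cost of the routine verifications of the simultaneous straightening, the nonvanishing of $\lambda$ from nondegeneracy, and the sign normalization $\lambda=-m^2$, all of which you carry out correctly.
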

\begin{proof} It is known  that locally there exist  isothermal coordinates $(u,v)$ on a  Lorentz surface $M^2_1$ such that the metric tensor  takes the form:
\begin{align}\label{2.9} g=E(u,v)^2(-du\otimes du +dv\otimes dv)\end{align} for some positive function $E$ (see \cite{L} (see, also \cite{CV}). Thus, after putting $$x=u+v,\quad y=u-v,$$  we obtain \e{2.8} from \e{2.9} with $m(x,y)=E(x,y)/\sqrt{2}$.
\end{proof}

\section{Main theorem.}

The main purpose of this article is prove the following fundamental result for Lorentz surfaces.

\begin{theorem}\label{T:1} The equation of Ricci is a consequence of  the equations of Gauss and Codazzi for any  Lorentz surface in any Lorentzian Kaehler surface.
\end{theorem}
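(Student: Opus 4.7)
The plan is to exploit the K\"ahler identity \e{1.5} in order to express the ``normal--normal'' component $\tilde R(X,Y;e_3,e_4)$ of the ambient curvature---the only ingredient in the Ricci equation \e{2.6} not already controlled by Gauss or Codazzi---as a linear combination of a ``tangential--tangential'' component (controlled by Gauss \e{2.4}) and a ``tangential--normal'' component (controlled by Codazzi \e{2.5}). The four steps will be: (i) set up an adapted null frame, (ii) write the action of $J$ in that frame, (iii) extract the algebraic relation among components of $\tilde R$ produced by \e{1.5}, and (iv) substitute into \e{2.6} and verify the resulting identity modulo \e{2.4} and \e{2.5}.

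\smallskip

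\noindent\textbf{Adapted frame and the action of $J$.} Using Lemma~\ref{L:1}, I take $e_1=\p_x$ and $e_2=\p_y$ as a null tangent frame with $g(e_1,e_2)=-m^2$. Since the ambient signature is $(2,2)$, the normal bundle carries a null frame $\{e_3,e_4\}$ with $\tilde g(e_3,e_3)=\tilde g(e_4,e_4)=0$ and $\tilde g(e_3,e_4)=-1$. Now $J$ cannot preserve any Lorentz $2$-plane of $\tilde M^2_1$, because a $J$-invariant real $2$-plane is a complex line and hence carries a definite induced metric. Using $J^2=-I$, the $\tilde g$-antisymmetry of $J$, and (after a swap $e_3\leftrightarrow e_4$ if needed), a short linear-algebra computation produces functions $\theta,\delta,c$ on $M^2_1$ with $\delta c=1+\theta^2$ (so $\delta,c\neq 0$) such that
\begin{align*}
Je_1&=\theta e_1+\delta e_4,& Je_2&=-\theta e_2+ce_3,\\
Je_3&=-\delta e_2+\theta e_3,& Je_4&=-ce_1-\theta e_4.
\end{align*}

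\smallskip

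\noindent\textbf{Structural identity and verification.} Applying \e{1.5} with $(Z,W)=(e_1,e_3)$ and substituting the above, the tangential and normal contributions combine to give the key relation
\begin{equation*}
\delta\,\tilde R(X,Y;e_3,e_4)=2\theta\,\tilde R(X,Y;e_1,e_3)-\delta\,\tilde R(X,Y;e_1,e_2)
\end{equation*}
valid for all $X,Y$ tangent to $M^2_1$; the choice $(Z,W)=(e_2,e_4)$ gives the same information because $\delta c=1+\theta^2$. Dividing by $\delta\neq 0$ and substituting into \e{2.6} with $(\xi,\eta)=(e_3,e_4)$, the term $\tilde R(X,Y;e_1,e_2)$ is eliminated via Gauss \e{2.4} and $\tilde R(X,Y;e_1,e_3)$ via Codazzi \e{2.5}. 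Both sides of the resulting equation now involve only intrinsic quantities ($R$, $h$, $\bar\nabla h$, $D$, and the scalars $\theta,\delta,c$). The condition $\tilde\nabla J=0$, when decomposed along $e_1,e_2$ into tangential and normal parts, expresses $d\theta$, $d\delta$, $dc$, and the normal connection $1$-forms through the components $h^\alpha_{ij}$ of the second fundamental form; these are exactly the identifications required to match $\tilde g(R^D(X,Y)e_3,e_4)+g([A_{e_3},A_{e_4}]X,Y)$ with the right-hand side produced by the Gauss--Codazzi substitutions, completing the proof.

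\smallskip

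\noindent\textbf{Main obstacle.} The algebraic relation in the displayed identity is immediate once the adapted frame is in place. The real difficulty is the final verification: expanding $R^D$ and the commutator $[A_{e_3},A_{e_4}]$ in the adapted frame and checking that the terms generated by $\tilde\nabla J=0$ cancel correctly against the Codazzi and Gauss substitutions without leaving any residual constraint. The nonvanishing of $\delta$ (equivalently $\delta c=1+\theta^2>0$)---which reflects the impossibility of a Lorentz surface being a complex submanifold of a Lorentzian K\"ahler surface---is what prevents the scheme from degenerating.
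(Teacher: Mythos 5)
Your strategy is in essence the paper's own: choose an adapted null frame in which $J$ acts through hyperbolic-angle-type coefficients, then use the Kaehler identity \e{1.5} to convert the normal--normal curvature $\tilde R(X,Y;e_3,e_4)$ appearing in Ricci \e{2.6} into a tangential--tangential term (absorbed by Gauss) plus a tangential--normal term (absorbed by Codazzi). Your displayed structural identity is correct: with the paper's normalization $\theta=\sinh\alpha$, $\delta=c=\cosh\alpha$ (and a swap of $e_3,e_4$) it is exactly the identity the paper derives as \e{3.27}, and the curvature flip $\tilde R(e_2,e_1;e_1,Je_2)=-\tilde R(e_1,e_2;e_2,Je_1)$ invoked at the end of the paper's proof is another instance of the same mechanism. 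So the idea is right; the problems are in the execution.

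First, a fixable slip: your frame relations are inconsistent as stated. With $e_1=\p_x$, $e_2=\p_y$, $g(e_1,e_2)=-m^2$, and $\tilde g(e_3,e_4)=-1$, metric compatibility $\tilde g(Je_1,Je_2)=\tilde g(e_1,e_2)$ forces $\delta c=(1+\theta^2)m^2$, whereas $J^2=-I$ forces $\delta c=1+\theta^2$; likewise antisymmetry gives $\tilde g(Je_3,e_1)=\delta m^2$ against $-\tilde g(e_3,Je_1)=\delta$. These agree only when $m\equiv 1$, so you must normalize $e_1=m^{-1}\p_x$, $e_2=m^{-1}\p_y$ as in \e{3.4} before any coordinate computation; otherwise every subsequent substitution inherits wrong powers of $m$. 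Second, and decisively: the final verification, which you yourself flag as the main obstacle, is asserted rather than performed, and it is where the entire content of the theorem lives. After your substitutions one still has to check that $\tilde g(R^D(e_1,e_2)e_3,e_4)+g([A_{e_3},A_{e_4}]e_1,e_2)$ genuinely matches the Gauss--Codazzi side, which requires expressing $\omega-\Phi$ and the derivatives of the angle through the second fundamental form (the paper's \e{3.16}--\e{3.18} and \e{3.26}), computing $R^D$ explicitly \e{3.29}, writing out all four Codazzi components \e{3.24}, and tracking the cancellation through \e{3.31}--\e{3.33}. The paper's computation confirms the cancellation does occur, so no step of your scheme would actually fail; but as written, the sentence ``these are exactly the identifications required to match'' is a restatement of the theorem, not a proof of it, and the argument is incomplete precisely at its load-bearing step.
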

\begin{proof} Assume that $\phi: M^2_1\to \tilde M_1^2$ is an isometric immersion of a  Lorentz  surface $M^2_1$ into  a Lorentzian Kaehler surface $\tilde M_1^2$. According to Lemma \ref{L:1}, we may assume that  locally  $M^2_1$ is equipped with the following Lorentzian metric:
\begin{align}\label{3.1} g=-m^2(x,y)(dx\otimes dy+dy\otimes dx)\end{align}
for some positive function $m$.
The Levi-Civita connection of $g$ satisfies
\begin{align}\label{3.2}\nabla_{\frac{\p}{\p x}}\frac{\p}{\p x}=\frac{2 m_x}{m}  \frac{\p}{\p x}, \; \nabla_{\frac{\p}{\p x}}\frac{\p}{\p y}=0, \;  \nabla_{\frac{\p}{\p y}}\frac{\p}{\p y}=\frac{2 m_y} {m} \frac{\p}{\p y}\end{align}
and the Gaussian curvature $K$  is given by
\begin{align}\label{3.3} K=\frac{2m m_{xy}-2m_x m_y}{m^4}.\end{align}

If we put \begin{align}\label{3.4} e_1=\frac{1}{m}\frac{\partial}{\partial x} ,\;\; e_2=\frac{1}{m}\frac{\partial}{\partial y},\end{align} then $\{e_1,e_2\}$ is a pseudo-orthonormal frame satisfying 
\begin{align} \label{3.5} & \<e_1, e_1\>=\< e_2,e_2\>=0,\;\<e_1, e_2\>=-1.\end{align} 

 From \e{3.2} and \e{3.4} we find
\begin{equation}\begin{aligned}\label{3.6}&\nabla_{e_1}e_1=\frac{m_x}{m^2}e_1, \; \nabla_{e_2}e_1=-\frac{m_y}{m^2}e_1, \; \\& \nabla_{e_1}e_2=-\frac{m_x}{m^2}e_2, \;  \nabla_{e_2}e_2=\frac{m_y}{m^2}e_2 .\end{aligned}\end{equation}

 For each tangent vector $X$ of $M^2_1$, we put
\begin{align} \label{3.7} &JX=PX+FX ,\end{align}
where $PX$ and $FX$ are the tangential and the normal components of $JX$. 
For the pseudo-orthonormal frame $\{e_1,e_2\}$ defined by \e{3.4}, it follows from \e{1.1},  \e{3.5}, and \e{3.7}  that 
\begin{align} \label{3.8} &Pe_1=(\sinh\alpha) e_1,\;\; Pe_2=-(\sinh \alpha) e_2\end{align} for some function $\alpha$. We call this function $\alpha$ the {\it Wirtinger angle}. 

If we put 
\begin{align} \label{3.9} &e_3=(\sech \alpha)Fe_1,\;\; e_4=(\sech \alpha)Fe_2,\end{align}
then we may derive from \e{3.7}-\e{3.9}  that
 \begin{align}& \label{3.10}J e_1=\sinh \alpha e_1+\cosh\alpha e_3,\hskip.2in Je_2=-\sinh\alpha e_2+\cosh\alpha e_4,\\& \label{3.11}Je_3=-\cosh \alpha e_1-\sinh \alpha e_3,\;\;  Je_4=-\cosh \alpha e_2+\sinh \alpha e_4,\\\label{3.12} &\<e_3,e_3\>=\<e_4,e_4\>=0,\;\; \<e_3,e_4\>=-1.\end{align}
   We call such a frame $\{e_1,e_2,e_3,e_4\}$ an {\it adapted pseudo-orthonormal frame} for  $M^2_1$. 
  
Let us put
$\nabla_X e_j=\sum_{k=1}^2 \omega_j^k(X)e_k;j,k=1,2
$. Then we deduce from \eqref{3.5} that 
\begin{align} &\label{3.13} \nabla_X e_1=\omega(X) e_1,\;\;  \nabla_X e_2=-\omega(X) e_2,\;\; \omega=\omega_1^1. \end{align}

Similarly, if we put
$D_Xe_r=\omega_r^s(X)e_s; r,s=3,4$, then \eqref{3.12} yields \begin{align} & \label{3.14} D_X e_3=\Phi(X) e_3,\; \; D_X e_4=-\Phi(X) e_4,\;\; \Phi=\omega_3^3. \end{align}

For the second fundamental form $h$,  we put $h(e_i,e_j)=h^3_{ij}e_3+h^4_{ij}e_4.$ Then,
by applying  $\tilde\nabla_X(JY)=J\tilde\nabla_XY$,  \e{3.10}-\e{3.14},  we may obtain the following:
\begin{align} & \label{3.15}  A_{e_3}e_j=h^4_{j2}e_1+h^4_{1j}e_2,\;\;  A_{e_4}e_j=h^3_{j2}e_1+h^3_{1j}e_2,\\&\label{3.16}e_j\alpha=(\omega_j- \Phi_j)\coth \alpha -2h^3_{1j},
\\ \label{3.17} &e_1\alpha=h^4_{12}-h^3_{11},\;\; e_2\alpha=h^4_{22}-h^3_{12},
\\&\label{3.18} \omega_j-\Phi_j=(h^3_{1j}+h^4_{j2})\tanh\alpha,
\end{align}  where $\omega_j=\omega(e_j)$ and $\Phi_j=\Phi(e_j)$ for $j=1,2$.

For simplicity, let us put
 \begin{align} \label{3.19} & h(e_1,e_1)=\beta e_3+\gamma e_4,\; h(e_1,e_2)=\delta e_3+\varphi e_4,\;\; h(e_2,e_2)=\lambda e_3+\mu e_4. \end{align} 
 In view of  \e{3.12}, and \e{3.19}, equation \e{2.4} of Gauss can be expressed as \begin{align}\label{3.20}\gamma\lambda+\beta \mu-2\delta\varphi=\frac{2(m m_{xy}-m_x m_y)}{m^4}-\tilde K,\end{align}
 where  $\tilde K=  -\tilde R(e_1,e_2;e_2,e_1) $  is the sectional curvature of the ambient space $\tilde M^2_1$ with respect to the 2-plane spanned by $e_1,e_2$.
 
By using \e{3.6}, \e{3.14}, and  \e{3.18}  we find
\begin{equation}\begin{aligned}\label{3.21}& D_{e_1}e_3=\(\frac{m_x}{m^2} -(\beta +\varphi)\tanh \alpha\)e_3, \;\\& D_{e_2}e_3=-\(\frac{m_y}{m^2} +(\delta+\mu) \tanh \alpha \)e_3, \\& D_{e_1}e_4=\((\beta+\varphi) \tanh \alpha-\frac{m_x}{m^{2}} \)e_4, \; \;\; \\&D_{e_2}e_4=\(\frac{m_y}{m^2}+(\delta+\mu) \tanh \alpha\)e_4.
\end{aligned}\end{equation}
So, it follows from \e{3.6}, \e{3.19} and \e{3.21} that
\begin{equation}\begin{aligned}\label{3.22}
 (\bar\nabla_{e_1}h)(e_1,e_1)=&\(\frac{\beta_x}{m}-\frac{\beta m_x}{m^2}-\beta(\beta+\varphi)\tanh\alpha  \)e_3 \\ \hskip.1in &+\(\frac{\gamma_x}{m}- \frac{3\gamma m_x}{m^2}+\gamma(\beta+\varphi)\tanh\alpha \)e_4,\\ 
(\bar\nabla_{e_1}h)(e_1,e_2)=&\(\frac{\delta_x}{m}+\frac{\delta m_x}{m^2}-\delta(\beta+\varphi)\tanh\alpha  \)e_3 \\& \hskip.1in +\(\frac{\varphi_x}{m}- \frac{\varphi m_x}{m^2}+\varphi(\beta+\varphi)\tanh\alpha \)e_4,
\\(\bar\nabla_{e_2}h)(e_1,e_1)=&\(\frac{\beta_y}{m}+ \frac{\beta m_y}{m^{2}} -\beta(\delta+\mu) \tanh \alpha \)e_3\\& \hskip.1in +\(\frac{\gamma_y}{m}+\frac{3\gamma m_y}{m^2}+\gamma(\delta+\mu)\tanh \alpha\)e_4
,  \\ (\bar\nabla_{e_1}h)(e_2,e_2)=& \(\frac{\lambda_x}{m}+\frac{3\lambda m_x}{m^{2}} - \lambda(\beta+\varphi)\tanh \alpha\)e_3 \\& \hskip.1in + \(\frac{\mu_x}{m}+\frac{\mu m_x}{m^{2}} +\mu(\beta+\varphi) \tanh \alpha \)e_4,
\\ (\bar\nabla_{e_2}h)(e_1,e_2)=&\(\frac{\delta_y}{m}-\frac{\delta m_y}{m^2}-\delta(\delta+\mu)\tanh\alpha  \)e_3 \\& \hskip.1in +\(\frac{\varphi_y}{m}+ \frac{\varphi m_y}{m^2}+\varphi(\delta+\mu)\tanh\alpha \)e_4
,\\ (\bar\nabla_{e_2}h)(e_2,e_2)=&\(\frac{\lambda_y}{m}-\frac{3\lambda m_y}{m^2}-\lambda(\delta+\mu)\tanh\alpha  \)e_3
\\& \hskip.1in +\(\frac{\mu_y}{m}- \frac{\mu m_y}{m^2}+\mu(\delta+\mu)\tanh\alpha \)e_4.
\end{aligned}\end{equation} 

On the other hand, from  \e{3.10} we also find

\begin{equation}\begin{aligned}\label{3.23}&( \tilde R(e_1,e_2)e_2)^\perp=-\sech \alpha \tilde R(e_1,e_2;e_2,Je_2)e_3\\&\hskip.5in -\{\tanh \alpha \tilde K+
\sech \alpha  \tilde R(e_1,e_2;e_2,Je_1)\}e_4 ,
\\& ( \tilde R(e_2,e_1)e_1)^\perp=\{\tanh \alpha \tilde K-\sech \alpha  \tilde R(e_2,e_1;e_1,Je_2)\}e_3  \\&\hskip.5in -\sech \alpha  \tilde R(e_2,e_1;e_1,Je_1) e_4.\end{aligned}\end{equation} 
By applying \e{3.4}, \e{3.12}, \e{3.22}, \e{3.23}, and the equation of Codazzi we get

\begin{equation}\begin{aligned}\label{3.24}& \lambda_x-\delta_y =( \lambda\beta+\lambda\varphi-\delta^2-\delta\mu)m \tanh\alpha-\frac{\delta m_y+3\lambda m_x}{m}\\& \hskip.6in -m\sech\alpha  \tilde R(e_1,e_2;e_2,Je_2),
\\& \mu_x-\varphi_y=(\delta\varphi -\beta\mu) m\tanh \alpha +\frac{\varphi m_y-\mu m_x}{m}
\\& \hskip.6in -m\sech\alpha  \tilde R(e_1,e_2;e_2,Je_1) -m (\tanh \alpha) \tilde K,
\\& \beta_y -\delta_x=(\beta\mu-\delta\varphi) m\tanh \alpha+\frac{\delta m_x-\beta m_y}{m}
\\& \hskip.6in -m\sech\alpha \tilde R(e_2,e_1;e_1,Je_2)+m( \tanh \alpha) \tilde K ,
\\& \gamma_y-\varphi_x =(\beta\varphi+\varphi^2-\delta\gamma-\gamma\mu) m\tanh \alpha -\frac{\varphi m_x+3\gamma m_y}{m}\\& \hskip.6in -m\sech\alpha  \tilde R(e_2,e_1;e_1,Je_1).
\end{aligned}\end{equation} 
Also, from \e{3.4}, \e{3.5}, \e{3.15}, \e{3.17} and  \e{3.19}  we have
\begin{align} & \label{3.25} A_{e_3}=\begin{pmatrix} \varphi &\mu \\ \gamma &\varphi\end{pmatrix},\; A_{e_4}=\begin{pmatrix} \delta &\lambda \\ \beta &\delta \end{pmatrix},
\\ \label{3.26} &\alpha_x=m(\varphi-\beta) ,\;\; \alpha_y=m(\mu-\delta).
\end{align} 

By applying \e{3.10}, \e{3.11} and \e{3.25} we derive that
\begin{align}\label{3.27}& \tilde R(e_1,e_2;e_3,e_4)=(\sech^2\alpha-\tanh^2\alpha) \tilde K
\\&\notag \hskip.5in  -2\sech \alpha\tanh\alpha \tilde R(e_1,e_2; e_2,Je_1),
\\\label{3.28} &\<[A_{e_3},A_{e_4}]e_1,e_2\>=\gamma \lambda-\beta\mu.\end{align}

From \e{3.6}, \e{3.21}, and \e{3.28},  we find
\begin{equation}\begin{aligned}\label{3.29} &\hskip-.2in \tilde g(R^D(e_1,e_2)e_3,e_4)=\frac{2m m_{xy}-2m_xm_y}{m^4}\\&+\left\{ (\delta+\mu)\alpha_x-(\beta+\varphi)\alpha_y\right\}\frac{\sech^2\alpha}{m}\\&\hskip-.1in +\{(\delta+\mu)m_x-(\beta+\varphi)m_y+m(\delta_x+\mu_x-\beta_y-\varphi_y)\}\frac{\tanh\alpha}{m^2}.\end{aligned}\end{equation}
Therefore,  the equation of Ricci is given by
\begin{equation}\begin{aligned}\label{3.30} &\hskip.1in \frac{2m m_{xy}-2m_xm_y}{m^4}+\left\{ (\delta+\mu)\alpha_x-(\beta+\varphi)\alpha_y\right\}\frac{\sech^2\alpha}{m}\\& +\{(\delta+\mu)m_x-(\beta+\varphi)m_y+m(\delta_x+\mu_x-\beta_y-\varphi_y)\}\frac{\tanh\alpha}{m^2}\\&\hskip-.1in  = \gamma \lambda-\beta\mu +(\sech^2\alpha-\tanh^2\alpha) \tilde K -2\sech \alpha\tanh\alpha \tilde R(e_1,e_2;e_2,Je_1)
.\end{aligned} \end{equation}

On the other hand, using \e{3.4} and \e{3.17} we find
\begin{align} & \label{3.31} (\delta+\mu)\alpha_x-(\beta+\varphi)\alpha_y=2m(\delta\varphi-\beta\mu).
\end{align}
Also, by applying \e{3.24}, we get
\begin{equation}\begin{aligned}\label{3.32} &(\delta+\mu)m_x-(\beta+\varphi)m_y+m(\delta_x+\mu_x-\beta_y-\varphi_y)
\\&\hskip.1in =2(\delta\varphi -\beta\mu) m^2\tanh \alpha -2m^2 \tanh \alpha \tilde K
 \\& \hskip.2in +m^2\sech\alpha\big\{R(e_2,e_1;e_1,Je_2) -\tilde R(e_1,e_2;e_2,Je_1)\big\}.\end{aligned} \end{equation}
Substituting \e{3.31} and \e{3.32} into equation \e{3.30} gives
\begin{equation}\begin{aligned}\label{3.33} &
  \gamma \lambda+\beta\mu-2\delta\varphi=\frac{2m m_{xy}-2m_xm_y}{m^4}-\tilde K  \\& \hskip.1in 
  -{\tanh\alpha}\sech\alpha\big\{\tilde R(e_2,e_1;e_1,Je_2) +\tilde R(e_1,e_2;e_2,Je_1)\big\} 
.\end{aligned} \end{equation}

On the other hand, by applying the curvature identities \e{1.3} and \e{1.5}, we find
$$ \tilde R(e_2,e_1;e_1,Je_2)=- \tilde R(e_1,e_2;e_2,Je_1).$$ 
Combining this with  \e{3.33}  shows that equation \e{3.33} becomes equation \e{3.20} of Gauss. Consequently, the equation of Ricci is a consequence  of Gauss and Codazzi for arbitrary Lorentz  surfaces in any Lorentzian Kaehler surface. 
\end{proof}

From the proof of Theorem 1 we also have the following. 

\begin{theorem} The equation of Gauss is a consequence of  the equations of Codazzi and Ricci for  Lorentz surfaces in Lorentzian Kaehler surfaces.
\end{theorem}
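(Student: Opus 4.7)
The plan is to observe that the entire reduction chain in the proof of Theorem \ref{T:1} — the passage from the Ricci equation \e{3.30} down to \e{3.33}, and then from \e{3.33} to the Gauss equation \e{3.20} — uses the Codazzi equations and the Kaehler curvature identities, but never invokes the Gauss equation itself. Therefore each implication along that chain is an algebraic equivalence under the standing assumption of Codazzi, so running it in the reverse direction yields Theorem 2.

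Concretely, I would first set up the same adapted pseudo-orthonormal frame $\{e_1,e_2,e_3,e_4\}$ as in \e{3.4}--\e{3.12}, obtain the connection relations \e{3.6}, \e{3.13}, \e{3.21}, and the shape-operator description \e{3.25}, together with the Wirtinger-angle derivatives \e{3.17} and \e{3.26}. None of these depends on either Gauss or Ricci; they follow from the definition of an isometric immersion and from $\tilde\nabla J=0$. I would then compute the left-hand side $\tilde g(R^D(e_1,e_2)e_3,e_4)$ to arrive at \e{3.29}, and rewrite the Ricci equation in the form \e{3.30}.

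Next, assuming the Codazzi equation \e{2.5} — equivalently, the four relations \e{3.24} — I would derive the two algebraic identities \e{3.31} and \e{3.32}. Identity \e{3.31} is a direct consequence of \e{3.17} (which is a structural consequence of $\tilde\nabla J=0$, not of Gauss), while \e{3.32} comes from adding two of the Codazzi relations in \e{3.24}. Substituting into \e{3.30} collapses the derivative terms and yields \e{3.33}. Finally, the Kaehler curvature symmetries \e{1.3} and \e{1.5} give $\tilde R(e_2,e_1;e_1,Je_2)=-\tilde R(e_1,e_2;e_2,Je_1)$, so the bracketed ambient-curvature term in \e{3.33} vanishes and \e{3.33} becomes exactly \e{3.20}, the Gauss equation.

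The main — and essentially only — obstacle is bookkeeping: one must verify carefully that the derivation of \e{3.31} and \e{3.32}, and hence of the equivalence (Ricci) $\Leftrightarrow$ (Gauss) modulo Codazzi, never silently uses the Gauss equation. A line-by-line audit of the proof of Theorem~\ref{T:1} confirms this, so Theorem 2 is an immediate corollary of the same computation read in the opposite logical direction.
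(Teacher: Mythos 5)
Your proposal is correct and coincides with the paper's own (implicit) argument: the paper derives this theorem as an immediate byproduct of the proof of Theorem~\ref{T:1}, precisely because the chain \e{3.30} $\to$ \e{3.33} $\to$ \e{3.20} consists of reversible substitutions using only Codazzi (via \e{3.31}, \e{3.32}) and the Kaehler identities \e{1.3}, \e{1.5}, never the Gauss equation itself. Your line-by-line audit confirming this is exactly what the paper's remark ``From the proof of Theorem 1 we also have the following'' relies on.
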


\begin{remark} Some special cases of Theorem 1 are obtained in \cite{c2,c3}.
\end{remark} 

\begin{remark}  Theorem 1 is false in general if the Lorentz surface in a Lorentzian Kaehler surface were replaced by a spatial surface in a Lorentzian Kaehler surface.
\end{remark}

\begin{remark} Since the three fundamental equations of Gauss, Codazzi and Ricci provide the conditions for local isometric embeddability, these equations also play some important role in physics; in particular in the Kaluza-Klein theory (cf. \cite{hv,M,S}).
\end{remark}


\begin{thebibliography}{12}
 
 \bibitem{br} M. Barros and A. Romero, {Indefinite Kaehler manifolds}, {\it Math. Ann.} {\bf 261} (1982), 44--62.

\bibitem{c1} B. Y. Chen, {\it Geometry of  Submanifolds}, Marcer Dekker, New York, 1973.

\bibitem{c2} B. Y. Chen, Minimal flat Lorentzian surfaces in Lorentzian complex space forms, {\sl Publ. Math. (Debrecen)}, {\bf 73} (2008), 233-248.

\bibitem{c3} B. Y. Chen, Nonlinear Klein-Gordon equations and Lorentzian minimal surfaces  in Lorentzian complex space forms, {\sl Taiwanese J. Math.} {\bf 13} (2009), no 1, 1--24.

\bibitem{CV} B. Y. Chen and J. Van der Veken, Spatial and Lorentzian surfaces in Robertson-Walker space-times, {\it J. Math. Phys.} {\bf 48} (2007),  no. 7, 073509,  12 pp.

\bibitem{hv} S. Haesen and L. Verstraelen, Ideally embedded space-times, {\it J. Math. Phys.} {\bf 45} (2004), 1497--1510.

\bibitem{L} J. C. Larsen, Complex analysis, maximal immersions and metric singularities, {\it Monatsh. Math.} {\bf 122} (1996), 105--156  .

\bibitem{M} M. D. Maia,  The physics of the Gauss-Codazzi-Ricci equations, {\it  Mat. Apl. Comput.} {\bf  5}  (1986),  283--292.

\bibitem{O}  B. O'Neill,   {\em Semi-Riemannian Geometry with Applications to Relativity}, Academic Press, New York, 1983.
 
\bibitem{S} A. Schwartz,  The Gauss-Codazzi-Ricci equations in Riemannian manifolds, {\it  J. Math. Phys. Mass. Inst. Tech.} {\bf 20} (1941). 30--79.

\end{thebibliography}
\end{document}